\newtheorem{theorem}{Theorem}[section]
\newtheorem{lemma}[theorem]{Lemma}
\newtheorem{proposition}[theorem]{Proposition}
\newtheorem{theoremA}{Theorem}
\theoremstyle{definition}
\newtheorem{definition}[theorem]{Definition}
\newtheorem{reduction}{Reduction}
\theoremstyle{remark}
\numberwithin{figure}{section}
\newcommand{\BB}[1]{{\mathbb #1}}
\newcommand{\OP}{\operatorname}
\def\Fred{F_{\text{red}}}
\def\Inj{\operatorname{Inj}}
\newcommand\IW[1]{\langle#1\rangle}
\def\rank{\operatorname{rank}}
\def\sk{\operatorname{sk}}
\def\ss{\mathfrak{ss}}
\def\dell{\overset{\leftarrow}{\delta}}
\def\delr{\overset{\rightarrow}{\delta}}
\def\FF{\mathbb{F}}
\def\ZZ{\mathbb{Z}}
\title[Non-derangements]{The complex of injective words of permutations which are not derangements is contractible}
\author{Assaf Libman}
\address{Institute of Mathematics, University of Aberdeen, Aberdeen, UK}
\email{a.libman@abdn.ac.uk}
\subjclass{05E45}
\keywords{word complexes, derangements}
\begin{document}

\begin{abstract}
Let $D_n \subseteq \Sigma_n$ be the set of derangements in the symmetric group. 
We prove that the complex of injective words generated by $\Sigma_n \!\!\!\setminus\!\! D_n$  is contractible.
This gives a conceptual explanation to the well known fact that the complex of injective words generated by $\Sigma_n$ is homotopy equivalent to the wedge sum $\underset{|D_n|}{\bigvee} S^{n-1}$.
\end{abstract}

\maketitle

\section{Introduction and the main result}

An injective word in the alphabet $[n]=\{1,\dots,n\}$ is a sequence $w=\IW{i_1 \dots i_k}$ of {\em distinct} elements $i_1,\dots,i_k \in [n]$.
A subword is a subsequence $v$ of $w$. 
Elements of the symmetric group $\Sigma_n$ correspond to injective words of length $n$.

Given $S \subseteq \Sigma_n$ let $X(S)$ denote the set of all the non-empty subwords of the elements in $S$, partially ordered by the subword relation $\preceq$.
Let $|X(S)|$ denote the cell complex with one cell $\Delta(w) \cong \Delta^{k-1}$, the standard $(k-1)$-simplex, for each word $w=\IW{i_1\dots i_k}$ of length $k$ in $X(S)$ and where $\Delta(v)$ is identified with the obvious face of $\Delta(w)$ if $v \preceq w$.
See Section \ref{sec:preliminaries} below for details. 
The study of the cell complex associated to the poset of all injective words dates back to Farmer \cite{MR0529895} and Bj\o rner \cite{MR1035492}, and in the generality of this paper it appears, e.g, in \cite{MR2552261} and \cite{MR4071308}.

When $S=\Sigma_n$ the poset $X(\Sigma_n)$ consists of all non-empty injective words (in the alphabet $[n]$).
It arises  in various branches of mathematics such as homological algebra e.g \cite{MR2043333}, and homological stability e.g \cite{MR2155519}, \cite{MR0586429} and others.
It was shown by Farmer \cite[Theorem 5]{MR0529895} that $| X(\Sigma_n)|$ has the homology of a wedge sum of spheres $\vee_k S^{n-1}$.
Farmer's result was improved by Bj\"orner and Wachs in \cite[Theorem 6.1]{MR0690055} who showed that there is, in fact, a homotopy equivalence $|X(\Sigma_n)| \simeq \vee_k S^{n-1}$.
Various other proofs of different flavours have been found for this result.
A topological proof was given by Randal-Williams \cite[Proposition 3.2]{MR3032101}.

It follows from a straightforward calculation of the Euler characteristic  that $|X(\Sigma_n)| \simeq \bigvee_{|D_n|} S^{n-1}$ where $D_n \subseteq \Sigma_n$ is the set of derangements, i.e permutation with no fixed points.
To date, however, there was no known natural one-to-one correspondence between the set of derangements and the factors in this wedge sum decomposition.
The purpose of this note is to prove the following result.

\begin{theoremA}\label{theorem:contractible}
Set $P_n=\Sigma_n \!\!\setminus \!\! D_n$.
Then $|X(P_n)|$ is contractible.
\end{theoremA}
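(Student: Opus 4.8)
The plan is to exhibit an explicit contraction of $|X(P_n)|$ built from the element $1 \in [n]$, which is a fixed point of many permutations in $P_n$. Concretely, I would first identify $P_n$ with the set of permutations having at least one fixed point, and single out the "canonical" fixed point: for $\sigma \in P_n$ let $f(\sigma)$ be the smallest fixed point of $\sigma$. The key observation is that if $\sigma \in P_n$ and $w$ is any subword of $\sigma$, then the word obtained by deleting $f(\sigma)$ from $w$ (if it occurs) is still a subword of some element of $P_n$ — indeed of $\sigma$ itself — and conversely one can often insert a fixed point. So the subposet of words $w \in X(P_n)$ that do not involve the letter $1$, together with a discrete-Morse-theoretic or poset-fibration argument, should let me collapse everything onto the star of the vertex $\langle 1 \rangle$.

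The cleanest route, I expect, is a closed cover / Quillen-type argument. Let $A \subseteq |X(P_n)|$ be the closed star of the vertex $\langle 1\rangle$, i.e. the union of $\Delta(w)$ over all $w \in X(P_n)$ with $1 \preceq w$. This $A$ is a cone with apex $\langle 1\rangle$, hence contractible. I would then build a deformation retraction of $|X(P_n)|$ onto $A$. For this I want to show that every word $w = \langle i_1 \dots i_k\rangle$ with no occurrence of $1$ lies in a subcomplex that cones off to $A$: the point is that since $w$ is a subword of some $\sigma \in P_n$, and $1$ is a fixed point of $\sigma$ (if $1$ is not the chosen fixed point, replace $\sigma$ by a conjugate/rearrangement argument — more carefully, one shows $\langle 1\rangle * w := \langle 1 i_1 \dots i_k\rangle$ is again in $X(P_n)$). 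The crucial closure property to establish is exactly this: \emph{if $w \in X(P_n)$ and $1$ does not occur in $w$, then $\langle 1\rangle * w \in X(P_n)$}, i.e. the letter $1$ can always be prepended. Granting this, the assignment $w \mapsto \langle 1\rangle * w$ (and $w \mapsto w$ if $1$ already occurs as the first letter, with an appropriate bookkeeping in the general case) defines a poset map that is homotopic to the identity and lands in $A$, giving contractibility. Alternatively, phrase it via discrete Morse theory: pair each word $w$ with $1 \not\preceq w$ to $\langle 1 \rangle * w$; one must check this is a valid acyclic matching on the face poset, with the single critical cell $\langle 1\rangle$.

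The main obstacle is the combinatorial heart: proving the closure property that prepending (or more generally inserting) the letter $1$ keeps a word inside $X(P_n)$, and handling words in which $1$ occurs but \emph{not} as the first letter. A subword $w \preceq \sigma$ may already contain $1$ in some internal position, so the matching "insert $1$ at the front" is not literally an involution on the nose; I anticipate needing to choose, for each $w$, a distinguished occurrence (say the position of the letter $1$, or its absence) and match $w$ with the word having $1$ moved to / inserted at the front, then verify acyclicity by a length/position argument. The subtlety is that $w \preceq \sigma \in P_n$ must imply the modified word is still $\preceq \sigma'$ for some $\sigma' \in P_n$; here one uses that $\sigma$ has \emph{some} fixed point, and that one has freedom to permute the non-fixed letters of $\sigma$ to realise any prescribed order of appearance of a given subword. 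Making this realisation precise — that any injective word missing at least one letter of $[n]$ extends to a permutation with a prescribed fixed point — is the technical crux, but it is elementary once set up correctly, since an injective word of length $< n$ omits some letter which can be designated as the needed fixed point of an extension.
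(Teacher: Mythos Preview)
Your closure property (if $1\notin w$ then $\IW{1}*w\in X(P_n)$) is correct, but the strategy built on it fails at precisely the point you flag as the obstacle, and that obstacle is fatal rather than merely technical. The assertion that the closed star $A$ of $\IW{1}$ is a cone is wrong for delta-complexes and, worse, is circular here: every $\sigma\in P_n$ contains the letter $1$, so every top cell meets $\IW{1}$ and hence $A=|X(P_n)|$. (For contrast, in $|X(\Sigma_2)|\cong S^1$ the closed star of $\IW{1}$ is likewise the whole complex --- certainly not a cone.) The Morse matching $w\leftrightarrow \IW{1}*w$ leaves unmatched every word with $1$ in position $\geq 2$; already for $n=3$ these are $\IW{21},\IW{31},\IW{213},\IW{321}$, so one is nowhere near a single critical cell. ``Moving $1$ to the front'' does not give a face relation ($\IW{21}$ and $\IW{12}$ are not incident), so it cannot repair the matching. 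The poset map $f(w)=\IW{1}*\hat w$ (with $\hat w$ the deletion of $1$) \emph{is} order-preserving, but $f(w)$ and $w$ are in general incomparable, so no zig-zag of natural transformations connects $f$ to the identity; and Quillen's Theorem~A for the inclusion $X(\{\sigma:\sigma(1)=1\})\hookrightarrow X(P_n)$ fails too, since the fiber under any word ending in $1$ (e.g.\ $\IW{21}$) is disconnected.

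The paper takes a completely different route. Using $\sk_{n-2}|X(P_n)|=\sk_{n-2}|X(\Sigma_n)|$ together with the Bj\"orner--Wachs theorem that $|X(\Sigma_n)|$ is $(n-2)$-connected, an Euler-characteristic count reduces contractibility to the single statement $Z_{n-1}(X(P_n);R)=0$. That vanishing is then proved by an intricate induction on a numerical ``excess'' $\epsilon(t)=\lambda(t)-\rho(t)$ and a secondary lexicographic invariant, showing that no top cell can carry a nonzero coefficient in any cycle. The authors explicitly note that their argument does not produce a shelling and that they do not know whether $X(P_n)$ is shellable --- so a direct collapsing or cone argument of the kind you propose may simply not exist.
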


By Proposition \ref{prop:skn-2}  $\sk_{n-2} |X(P_n)| = \sk_{n-2} |X(\Sigma_n)|$, hence
\[
| X(\Sigma_n)| \simeq | X(\Sigma_n)| / | X(P_n)| 
=\bigvee_{|D_n|} S^{n-1}.
\]
Several remarks are in place.
Our proof of the contractibility of $|X(\Sigma_n\!\!\setminus \!\! D_n|$ relies on Bj\"orner-Wachs result that $|X(\Sigma_n)|$ is $(n-2)$-connected.
Thus, we cannot boast giving an independent proof of their result.

Unlike Bj\o rner and Wachs, our proof of Theorem \ref{theorem:contractible}  does not provide shelling for the poset $\Sigma_n\!\!\setminus\! D_n$. 
In fact, one can check that the recursive coatom ordering they found for $X(\Sigma_n)$ does not restrict to $X(\Sigma_n\!\!\setminus \!\!D_n)$.
We do not know if  $X(\Sigma_n \!\!\setminus\!\! D_n)$ is shellable.

\section{Preliminaries}\label{sec:preliminaries}

\subsection*{Semi simplicial sets}
A semi-simplicial set $X$ is a collection of sets $(X_n)_{n=0}^\infty$ and maps $\partial_i \colon X_k \to X_{k-1}$ where $0 \leq i \leq k$, called face maps, satisfying the identities $\partial_j \circ \partial_i = \partial_{i} \circ \partial_{j+1}$ if $i \leq j$.
The geometric realisation of $X$ denoted $\| X\|$ is the quotient space of $\coprod_k X_k \times \Delta^k$, where $\Delta^k$ is the standard $k$-simplex, under the obvious identification of $\{\partial_ix\} \times \Delta^{k-1}$ with the $i^\text{th}$ face of $\{x\} \times \Delta^k$ for any $x \in X_k$.
Thus, $\| X\|$ is a cell complex with one $k$-cell for each $x \in X_k$.
See e.g \cite{MR0084138}, \cite{MR0300281}.

The homology of $\|X\|$ can be computed by means of the following chain complex $C_*(X)$.
The group of $k$-chains is 
\[
C_k(X)=\ZZ[X_k], 
\]
the free abelian group with basis $X_k$.
The face maps $\partial_i \colon X_k \to X_{k-1}$ extend to unique homomorphisms $\partial_i \colon C_k(X) \to C_{k-1}(X)$ and the differential in the chain complex $d_k \colon C_k(X) \to C_{k-1}(X)$ is given by $d_k = \sum_{j=0}^k (-1)^j \partial_j$.
See e.g \cite[Chapter 2.1]{MR1867354}.

\subsection*{Complexes of injective words}
As in the introduction, we fix the alphabet 
\[
[n]=\{1,\dots,n\}
\]
and let $\Inj(n)$ denote the set of all {\em non-empty} injective words $w=\IW{i_1\dots i_k}$ in the alphabet $[n]$ (i.e $i_1, \dots, i_k$ are distinct), partially ordered by the relation of subwords $v \preceq w$.
For $1 \leq k \leq n$ denote
\[
\Inj_k(n) = \{ s \in \Inj(n) : \text{$s$ has length $k$}\}
\]
There are ``deletion maps'' $\partial_i \colon \Inj_k(n) \to \Inj_{k-1}(n)$, where $1 \leq i \leq k$,
\[
\partial_i \IW{s_1\cdots s_k} = \IW{s_1 \cdots \widehat{s_i} \cdots s_k}.
\]
Suppose that $T \subseteq \Inj(n)$ is a subposet which is closed under the formation of subwords.
Set $T_k = T \cap \Inj_k(n)$, the subset of words of length $k$ in $T$.
Then the deletion maps $\partial_i$ restrict to maps $\partial_i \colon T_k \to T_{k-1}$.
We obtain a semi-simplicial set $\ss(T)$ where 
\[
\ss(T)_k = T_{k+1}
\]
and the face maps 
\[
\ss(T)_k \xrightarrow{\partial_i} \ss(T)_{k-1} \qquad (0 \leq i \leq k)
\] 
are the deletion maps $T_{k+1} \xrightarrow{\partial_{i+1}} T_k$. 
We denote
\[
|T| = \| \ss(T)\|
\]
and  call it  the {\em complex of injective words associated to $T$}.
It is a cell complex formed of one cell of dimension $k$ for each injective word $t \in T$ of length $k+1$.

It is worthwhile to remark that the cell complex $|T|$ is homeomorphic (albeit not in a natural way) to the nerve of the poset $T$ (considered as a small category).
Thus, our notation does not conflict with the standard one $|T|$ for the nerve.

By the subsection above, the cellular chain complex $C_*(T)$ that computes the homology of $|T|$ has the form 
\begin{eqnarray}\label{eq:chain complex C(T)}
&& C_k(T)=\ZZ[T_{k+1}] 
\\
\nonumber
&& C_k(T) \xrightarrow{d_k} C_{k-1}(T) \qquad \text{  where  } \qquad d_k = \sum_{j=1}^{k+1} (-1)^{j-1} \partial_j.
\end{eqnarray}

\subsection*{Permutation and injective words}
There is a bijection 
\[
\Sigma_n \approx \Inj_n(n), \qquad \sigma \mapsto \IW{\sigma(1) \,\cdots \, \sigma(n)}.
\]
%
For $S \subseteq \Sigma_n$ let $X(S)$ be the smallest subset of $\Inj(n)$ which contains $S$ and is closed under formation of subwords. 
That is, 
\[
X(S) = \{ t \in \Inj(n) \ : \ \text{$t \preceq s$ for some $s \in S$}\}.
\]
Let $X_k(S)$ be the subset consisting of the words of length $k$.
We call $|X(S)|$ the {\em complex of injective words generated by $S$}.
Its homology groups are computed by the chain complex $C_*(X(S))$ described in \eqref{eq:chain complex C(T)}, with $C_k(X(S))=\ZZ[X_{k+1}(S)]$.

\section{Theorem \ref{theorem:contractible} for $n=2,3$}\label{section:n=2,3}

In this section we prove Theorem \ref{theorem:contractible} for $n=2,3$.
The case $n=3$ is an excellent example that illustrates a phenomenon that the cell complex $X(\Sigma_n \!\! \setminus \!\!D_n)$ exhibits that is the key to the proof of the theorem in the general case.

\subsection*{The case $n=2$}
It is clear that $P:=\Sigma_2\! \! \setminus\! D_2 = \{ \OP{id}\}$.
As a collection of injective words, $P = \{\IW{12}\}$.
By inspection $X(P)=\{ \IW{12}, \IW{1},\IW{2}\}$ so  $|X(P)| \cong \Delta^1$ which is contractible.

\subsection*{The case $n=3$}
Set $P=\Sigma_3 \!\!\setminus \!D_3$.
Thus, $P$ is the set of permutations in $\Sigma_3$ that fix at least one letter.
In the next table we list the elements of $P$, presented as injective words, in lexicographical order.
In the second column we list all the elements of $X_2(P)$ and specify which elements in $P$ it is incident with.

\begin{tabular}{|l|ll|ll|}
\hline
$X_3(P)$              & $X_2(P)$   &  incident with                             & $X_1(P)$         & incident with
\\
\hline
$\sigma_1=\IW{123}$      &  $\IW{12}$    & $\sigma_1,\sigma_2$                  & $\IW{1}$         & $\IW{12}, \IW{13}, \IW{21}, \IW{31}$ 
\\
$\sigma_2=\IW{132}$      &  $\IW{13}$    & $\sigma_1,\sigma_2,\sigma_3$         & $\IW{2}$         & $\IW{12}, \IW{21}, \IW{23}, \IW{32}$
\\
$\sigma_3=\IW{213}$      &  $\IW{21}$    & $\sigma_3,\sigma_4$                 & $\IW{3}$          & $\IW{13}, \IW{23}, \IW{31}, \IW{32}$
\\
$\sigma_4=\IW{321}$      &  $\IW{23}$    & $\sigma_1,\sigma_3$                 &                   &
\\
                      &  $\IW{31}$    & $\sigma_4$                            &                    &
\\
                      &  $\IW{32}$    & $\sigma_2,\sigma_4$                    &                    &
\\
\hline
\end{tabular}

We see that the $1$-cell indexed by $\IW{31}$ is an ``exposed'' face of the $2$-cell indexed by $\sigma_4$ in the sense that no other $2$-cell in $X(P)$ contains it.
We can therefore collapse the interiors of the $1$-simplex $\IW{31}$ and the $2$-simplex $\IW{321}$ onto the boundary of the latter.
Hence $|X(P)|$ is homotopy equivalent (via simple homotopy) to the cell complex associated with the subposet $Y =X(P) \setminus \{ \IW{321},\IW{31}\}$ described in the table below.

\begin{tabular}{|l|ll|ll|}
\hline
$Y_3$              & $Y_2$   &  incident with              & $Y_1$ & incident with 
\\
\hline
$\sigma_1=\IW{123}$      &  $\IW{12}$    & $\sigma_1,\sigma_2$                & $\IW{1}$ & $\IW{12}, \IW{13}, \IW{21}$ 
\\
$\sigma_2=\IW{132}$      &  $\IW{13}$    & $\sigma_1,\sigma_2,\sigma_3$       & $\IW{2}$ & $\IW{12}, \IW{21}, \IW{23}, \IW{321}$
\\
$\sigma_3=\IW{213}$      &  $\IW{21}$    & $\sigma_3$                        & $\IW{3}$ & $\IW{13}, \IW{23}, \IW{32}$
\\
                      &  $\IW{23}$    & $\sigma_1,\sigma_3$               &       &
\\
                      &  $\IW{32}$    & $\sigma_2$                        &       &
\\
\hline
\end{tabular}

This time the $1$-cell indexed by $\IW{21}$ is an exposed face of the $2$-cell $\sigma_3$ and the $1$-cell $\IW{32}$ is an exposed face of the $2$-cell $\sigma_2$.
By collapsing these $2$-cells onto their boundaries we are left with a new cell complex $|Z|$ which is homotopy equivalent to $|Y|$ and is the cell complex associated to the poset of injective words $Z = Y \setminus \{\sigma_2,\sigma_3, \IW{21},\IW{32}\}$ described in the next table.

\begin{tabular}{|l|ll|ll|}
\hline
$Z_3$              & $Z_2$   &  incident with                          & $Z_1$    & incident with 
\\
\hline
$\sigma_1=\IW{123}$      &  $\IW{12}$    & $\sigma_1$                         & $\IW{1}$  & $\IW{12}, \IW{13}$ 
\\
                      &  $\IW{13}$    & $\sigma_1$                        & $\IW{2}$  & $\IW{12}, \IW{23}$
\\
                      &  $\IW{23}$    & $\sigma_1$                        & $\IW{3}$  & $\IW{13}, \IW{23}$ 
\\
\hline
\end{tabular}

\noindent
Clearly $|Z|$ is homeomorphic with $\Delta^2$, proving that $|X(\Sigma_3 \!\!\setminus\!\! D_3)|$ is contractible.

\subsection*{The case $n > 3$}
A similar (and very illuminating!) exercise, albeit lengthy, shows that $|X(\Sigma_4\!\! \setminus \!\!D_4)|$ is contractible.
One simply removes repeatedly top dimensional cells together with their exposed faces until one is left with a point.
The surprise is that this process never stops, namely at each iteration the resulting cell complex contains an exposed face of a top dimensional cell.

While this goes well for $n=3,4$, for $n >4$ we are unable to prove that the iterative process above of removing top cells always yields a cell complex with top dimensional cells containing exposed faces.
In practice, what we will prove is that $H_{n-1}(|X(\Sigma_n \!\!\setminus \!\!D_n)|;R)=0$  for any coefficient ring $R$ by repeatedly showing that more and more top $(n-1)$-cells cannot be basis elements used in any $(n-1)$-cycle, thus showing that $Z_{n-1}(X(\Sigma_n \!\!\setminus \!\!D_n);R)=0$.
As we will see this homological calculation is enough to prove Theorem \ref{theorem:contractible}.

\section{Proof of Theorem \ref{theorem:contractible}}

In Section \ref{section:n=2,3} we have proved the theorem for $n=2,3$.
Throughout this section we therefore fix some $n \geq 4$ and prove Theorem \ref{theorem:contractible}.
Set
\[
S = \{ \text{all injective words of length $n$ in the alphabet $[n]$} \} \approx \Sigma_n.
\]
We will freely interchange the roles of $\Sigma_n$ and $S$ as we deem convenient.
Let $D_n \subseteq \Sigma_n$ denote the set of derangement, i.e permutations with no fixed points.
Set
\[
P=\Sigma_n \setminus D_n.
\]
Thus, $P$ consists of the permutations that fix at least one letter.
Written as injective words,
\[
P = \{\IW{s_1\cdots s_n} \in S : \text{$s_k=k$ for some $1 \leq k \leq n$}\}.
\]
Our goal is to prove that $|X(P)|$ is contractible.

Let $\sk_{d}X$ denote the $d$-skeleton of a cell complex $X$.

\begin{proposition}\label{prop:skn-2}
$\sk_{n-2} |X(P)| = \sk_{n-2} |X(S)|$.
\end{proposition}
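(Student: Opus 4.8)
The plan is to prove the sharper combinatorial fact that $X_k(P) = X_k(S)$ for every $1 \le k \le n-1$. Since the $d$-cells of $|X(T)|$ are indexed by the words of length $d+1$ in $T$, having $X_k(P)=X_k(S)$ for all $k\le n-1$ means that $|X(P)|$ and $|X(S)|$ have exactly the same cells in each dimension $\le n-2$, attached in the same way, which is precisely the assertion $\sk_{n-2}|X(P)| = \sk_{n-2}|X(S)|$. One inclusion, $X_k(P) \subseteq X_k(S)$, is immediate from $P \subseteq S$, so all the work is in the reverse inclusion.

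So fix a word $w = \IW{s_1\cdots s_k}$ with $k \le n-1$; I must produce $\sigma \in P$ with $w \preceq \sigma$. Let $L = [n] \setminus \{s_1,\dots,s_k\}$ be the set of letters missing from $w$. Since $k \le n-1$ we have $|L| = n-k \ge 1$, so choose $\ell \in L$. Since also $k \le n-1 = |[n]\setminus\{\ell\}|$, choose positions $p_1 < p_2 < \dots < p_k$ all lying in $[n]\setminus\{\ell\}$. Define $\sigma \in \Sigma_n$ by $\sigma(p_j) = s_j$ for $1 \le j \le k$, by $\sigma(\ell) = \ell$ --- legal because $\ell$ is neither among the positions $p_1,\dots,p_k$ nor among the values $s_1,\dots,s_k$ --- and by spreading the $n-k-1$ remaining letters of $L\setminus\{\ell\}$ over the $n-k-1$ remaining positions in any way. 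Reading off $\sigma$ at the positions $p_1 < \dots < p_k$ recovers $w$, so $w \preceq \sigma$; and $\sigma(\ell)=\ell$ shows $\sigma$ is not a derangement, i.e. $\sigma \in P$. Hence $w \in X_k(P)$, and $X_k(P) = X_k(S)$ for all $k \le n-1$, which gives the proposition.

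I do not expect a genuine obstacle here: the argument is elementary, and the hypothesis $k \le n-1$ is used exactly twice --- once to guarantee that $L$ is non-empty so that $\ell$ exists, and once to guarantee that there is still room for $k$ positions after excluding the single position $\ell$. The one point deserving care is the simultaneous constraint on $\ell$, namely that it be available both as an unused position and as an unused value; this is exactly why $\ell$ is chosen from $L$ and the $p_j$ from the complement of $\{\ell\}$. It is also worth noting for context that the $(n-1)$-cells genuinely do differ, since $X_n(S) = \Sigma_n$ while $X_n(P) = P$, so $n-2$ is the optimal skeleton in the statement.
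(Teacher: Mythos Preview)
Your proof is correct and follows essentially the same idea as the paper's: both show that every injective word of length at most $n-1$ extends to a permutation in $P$. The paper's version is marginally more economical, observing that it suffices to treat words of length exactly $n-1$ (insert the unique missing letter $k$ at position $k$) and then deduce the shorter cases by passing to subwords; your direct construction for all $k\le n-1$ is equally valid.
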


\begin{proof}
Any injective word $v \in \Inj(n)$ of length $n-1$ is missing exactly one letter $k \in [n]$.
By inserting $k$ at the $k^\text{th}$ position we form the injective word $w=\IW{v_1\,\cdots \,v_{k-1} \, k \, v_k \, \cdots \, v_{n-1}}$ which clearly belongs to $P$ since $w_k=k$.
It follows that $X_{n-1}(P)=X_{n-1}(S)=\Inj_{n-1}(n)$.
Hence, $X_k(P)=X_k(S)=\Inj_k(n)$ for all $1 \leq k \leq n-1$ and it follows that $\sk_{n-2} |X(P)| = \sk_{n-2} |X(S)|$.
\end{proof}

We will come back to the argument in this proof later. 
Clearly $|X(S)|$ is a cell complex of dimension $n-1$ and, we recall that $|X(S)| \simeq \vee_{|D_n|} S^{n-1}$.
In particular $|X(S)|$ is $(n-2)$-connected.
One easy consequence of this and Proposition \ref{prop:skn-2} is that
\begin{multline*}
\chi(|X(P)|) =
\chi(|X(S)|) - (-1)^{n-1}(|S|-|P|) \\
=
(1+(-1)^{n-1}|D_n|) - (-1)^{n-1}|D_n| = 1.
\end{multline*}
Since $n \geq 4$, the cellular approximation theorem and Proposition \ref{prop:skn-2} imply that $\sk_{n-2}|X(P)|$ is simply connected, and hence $|X(P)|$ is simply connected. 
It also follows from Proposition \ref{prop:skn-2} that
\begin{equation}\label{eq:H_*(P)=H_*(S)=0 *<=n-3}
\tilde{H}_k(|X(P)|;R) = \tilde{H}_k(|X(S)|;R)=0 
\end{equation}
for every $1 \leq k \leq n-3$ and every coefficients ring $R$. 
Therefore, for $R=\ZZ$ or $R=\BB F_p$
\[
\chi(|X(P)|) = 1 + (-1)^{n-2}\rank_R H_{n-2}(|X(P)|;R) + (-1)^{n-1}\rank_R H_{n-1}(|X(P)|;R).
\]
But we have seen that $\chi(|X(P)|)=1$ so it follows that for $R=\ZZ$ or $R=\BB F_p$
\begin{equation}\label{eq:Hn-2 Hn-1}
\rank_R H_{n-2}(|X(P)|;R)  = \rank_R H_{n-1}(|X(P)|;R).
\end{equation}

\begin{reduction}\label{reduction 1}
Theorem \ref {theorem:contractible} follows if for $R=\ZZ$ and $R=\FF_p$
\begin{equation}\label{eq:Hn-1=0}
H_{n-1}(|X(P)|;R) = 0.
\end{equation}
\end{reduction}

\begin{proof}[Proof of Reduction \ref{reduction 1}]
\eqref{eq:H_*(P)=H_*(S)=0 *<=n-3} and \eqref{eq:Hn-2 Hn-1} combined with \eqref{eq:Hn-1=0} imply that $\tilde{H}_*(|X(P)|)=0$, and since $|X(P)|$ is simply connected Whitehead's theorem [REFERENCE] implies that $|X(P)|$ is contractible, as needed.
\end{proof}

We use the chain complex $C_*(X(P);R) = C_*(X(P)) \otimes R$ described in Section \ref{sec:preliminaries} to compute $H_*(|X(P)|;R)$.
For dimensional reasons Reduction \ref{reduction 1} is equivalent to

\begin{reduction}\label{reduction 2}
Theorem \ref {theorem:contractible} follows if for $R=\ZZ$ and $R=\FF_p$
\[
Z_{n-1}(X(P); R)=0.
\]
\end{reduction}

In what follows we will fix the ring $R$ once and for all and write $C_*(X(P))$ for the chain complex $C_*(X(P)) \otimes R$.

We will use the usual notation for intervals $[a,b], [a,b)$;
All intervals in this note are taken in $\mathbb{Z}$.
For example,
\[
[a,b]=\{k \in \ZZ : a \leq k \leq b\}, \qquad [a,b) =\{k \in \ZZ : a \leq k < b\} \qquad \text{etc.}
\]

For the remainder of the proof let $F$ denote the set of all injective words of length $n-1$ in the alphabet $[n]$, i.e
\[
F = \Inj_{n-1}(n). 
\]
Any $t \in F$ has a unique symbol $k \in [n]$ which does not appear in $t$.
We denote for any $1 \leq k \leq n$
\[
F(k) = \{ t \in F : \text{the symbol $k$ is missing from $t$}\}.
\]
Clearly $F$ is the disjoint union $\bigcup_{k=1}^n F(k)$.


\begin{definition}\label{def:sigma_i}
Let $t \in F(k)$.
For any $1 \leq i \leq n$ we define 
\[
\sigma_i(t) = \IW{t_1,\dots,t_{i-1},k,t_i,\dots,t_{n-1}} \in S.
\]
\end{definition}

Thus, $s=\sigma_i(t)$ is the (unique) element in $S$ such that $t \preceq s$ and $s_i=k$:
\begin{equation}\label{eq:sigma_i description}
s_j = \left\{
\begin{array}{ll}
t_j   & \text{if $1 \leq j \leq i-1$} \\
k     & \text{if $j=i$} \\
t_{j-1} & \text{if $i <j \leq n$}.
\end{array}\right.
\end{equation}
It follows that 
\begin{equation}\label{eqn:sigma_k in P}
t \in F(k) \implies \sigma_k(t) \in P.
\end{equation}
In particular, as we have seen in Proposition \ref{prop:skn-2},
\begin{equation}\label{eqn:F=X_n-1(P)}
F = X_{n-1}(P).
\end{equation}

\begin{definition}
Let $t \in F$.
Set
\[
M(t) = \{ 1 \leq i \leq n \ : \ \sigma_i(t) \in P\}.
\]
\end{definition}

It is clear that for any $t \in F$ and any $s \in S$
\[
t=\partial_i s \iff s=\sigma_i(t).
\]
%
Hence, for any $t \in F$,
\begin{equation}\label{eqn:sigma_i t preceq s}
\{ \sigma_i(t) : i \in M(t)\} \ = \ \{ s \in P : t \preceq s\}.
\end{equation}

Any chain $\alpha \in C_{n-1}(X(P))$ is an $R$-linear combination
\begin{equation}\label{eqn:alpha cycle}
\alpha = \sum_{s \in P} \alpha_s \cdot s, \qquad (\alpha_s \in R).
\end{equation}
By the description of the chain complex $C_*(X(P))$, and by \eqref{eqn:F=X_n-1(P)}, the differential  $d(\alpha) \in C_{n-2}(X(P))=\ZZ[F] \otimes R$ has the form 
\begin{eqnarray}\label{eqn:diff alpha}
&& d(\alpha) = \sum_{t \in F} \beta_t \cdot t, \qquad\text{ where } \\ 
\nonumber
&& \beta_t = \sum_{i \in M(t)} (-1)^{i-1} \alpha_{\sigma_i(t)}.
\end{eqnarray}

\begin{definition}\label{def:homologically redundant}
Call $s \in P$ {\em homologically redundant} if with the notation of \eqref{eqn:alpha cycle} $\alpha_s=0$ in any $\alpha \in Z_{n-1}(X(P);R)$.
Set
\[
\Fred = \{  t \in F \ : \ \forall s \in P ( t \preceq s \implies \text{$s$ is homologically redundant})\} 
\]
\end{definition}

\begin{reduction}\label{reduction 3}
Theorem \ref {theorem:contractible} follows if 
\[
\Fred = F.
\]
\end{reduction}

\begin{proof}
Choose some $\alpha \in Z_{n-1}(X(P);R)$ expressed as in \eqref{eqn:alpha cycle}.
Since any $s \in P$ is incident with some $t \in F=\Fred$, it follows that $\alpha_s=0$.
We deduce that $Z_{n-1}(X(P);R)=0$ and then apply Reduction \ref{reduction 2}.
\end{proof}

For the remainder of this section we prove Reduction \ref{reduction 3}.

\begin{lemma}\label{L:G lemma}
Let $t \in F$ and choose some $\ell \in M(t)$.
If for every $i \in M(t) \setminus\{\ell\}$ the simplex $s=\sigma_i(t)$ is incident with a face $t' \in \Fred$ then $t \in \Fred$.
\end{lemma}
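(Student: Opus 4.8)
The plan is to exploit the cycle condition \eqref{eqn:diff alpha}: if $\alpha \in Z_{n-1}(X(P);R)$ then $\beta_t = 0$ for every $t \in F$, and in particular for the given $t$ we have $\sum_{i \in M(t)} (-1)^{i-1}\alpha_{\sigma_i(t)} = 0$. I would first observe that by hypothesis, for each $i \in M(t)\setminus\{\ell\}$ the simplex $s = \sigma_i(t)$ has a face $t' \in \Fred$; by the very definition of $\Fred$ (Definition \ref{def:homologically redundant}), since $t' \preceq s$, this forces $s$ to be homologically redundant, i.e.\ $\alpha_{\sigma_i(t)} = 0$. Substituting into the vanishing of $\beta_t$ leaves only the single surviving term $(-1)^{\ell-1}\alpha_{\sigma_\ell(t)} = 0$, hence $\alpha_{\sigma_\ell(t)} = 0$ as well. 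Since $\alpha$ was an arbitrary cycle, $\sigma_\ell(t)$ is also homologically redundant.

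Having shown that $\sigma_\ell(t)$ is homologically redundant, together with the $s = \sigma_i(t)$ for $i \in M(t)\setminus\{\ell\}$ — which were already known to be redundant — I would then invoke \eqref{eqn:sigma_i t preceq s}, which says precisely that $\{\sigma_i(t) : i \in M(t)\}$ is the complete set of $s \in P$ with $t \preceq s$. Every such $s$ is therefore homologically redundant, and so by Definition \ref{def:homologically redundant} we conclude $t \in \Fred$, which is the assertion of the lemma.

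The argument is short and essentially formal once the right objects are in place; the one point requiring care is the bookkeeping of which $\sigma_i(t)$ lie in $P$ — only those with $i \in M(t)$ index basis elements of $C_{n-1}(X(P))$, and the sum in $\beta_t$ is correctly taken over exactly that index set, so no stray terms appear. I do not anticipate a genuine obstacle here: the lemma is the inductive engine that will later be fed into a well-chosen ordering of $F$ to establish $\Fred = F$ (Reduction \ref{reduction 3}), and the real work will be in constructing that ordering and verifying the hypothesis ``$\sigma_i(t)$ is incident with a face in $\Fred$'' at each stage — but that is the business of the subsequent lemmas, not of this one.
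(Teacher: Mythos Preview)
Your proposal is correct and follows essentially the same route as the paper's proof: use the vanishing of $\beta_t$ from \eqref{eqn:diff alpha}, kill all terms $\alpha_{\sigma_i(t)}$ with $i\neq\ell$ via the hypothesis and Definition~\ref{def:homologically redundant}, isolate $\alpha_{\sigma_\ell(t)}=0$, and conclude $t\in\Fred$ via \eqref{eqn:sigma_i t preceq s}. If anything, you spell out more explicitly than the paper does the step that ``$\sigma_i(t)$ has a face in $\Fred$'' implies ``$\sigma_i(t)$ is homologically redundant''.
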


\begin{proof}
Consider some $\alpha \in Z_{n-1}(X(P))$ expressed as in \eqref{eqn:alpha cycle} and set $\beta=d(\alpha)$.
Then by \eqref{eqn:diff alpha} and by the hypothesis that every $\sigma_i(t)$ for $i \in M(t)\setminus\{\ell\}$ is homologically redundant it follows that
\[
0 = \beta_t = \sum_{i \in M(t)} (-1)^{i-1} \alpha_{\sigma_i(t)} 
= \sum_{i \in M(t) \setminus \{\ell\}} (-1)^{i-1} \alpha_{\sigma_i(t)}  + (-1)^{\ell-1} \alpha_{\sigma_\ell(t)}
= \pm \alpha_{\sigma_\ell(t)}.
\]
Therefore, $\alpha_{\sigma_i(t)}=0$ for all $i \in M(t)$, so  $t \in \Fred$ by \eqref{eqn:sigma_i t preceq s} and Definition  \ref{def:homologically redundant}.
\end{proof}


\begin{definition}\label{def:J_0 J_1 rho lambda}
Let $t \in F$.
Set
\begin{align*}
 J_0(t) &= \{ 1 \leq j \leq n-1 \ : \ t_j=j\} \\
 J_1(t) &= \{ 1 \leq j \leq n-1 \ : \ t_j=j+1\}
\\
\lambda(t) &= \max \ J_1(t) \cup \{0\}, 
\\
\rho(t) &= \min \ J_0(t) \cup \{n \}.
\end{align*}
\end{definition}

It is clear from the definition that 
\begin{equation}\label{eqn:lambda rho bounds}
0 \leq \lambda(t) \leq n-1 \qquad \text{and} \qquad 1 \leq \rho(t) \leq n.
\end{equation}
It is also clear that $J_0(t) \cup\{n\}$ and $J_1(t) \cup \{0\}$ are disjoint so
\begin{equation}\label{eqn:lambda neq rho}
\lambda(t) \neq \rho(t).
\end{equation}

\begin{lemma}\label{L:description M(t)}
Consider some $t \in F(k)$ and set $\lambda=\lambda(t)$ and $\rho=\rho(t)$.
Then
\[
M(t) = [1,\lambda] \cup [\rho+1,n] \cup \{k\}.
\]
\end{lemma}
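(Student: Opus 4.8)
The plan is to unwind the definitions and carry out a direct case analysis on the position $i$. Fix $t \in F(k)$ and $i \in [1,n]$, and write $s = \sigma_i(t)$, whose entries are given explicitly by \eqref{eq:sigma_i description}. By definition $i \in M(t)$ precisely when $\sigma_i(t) \in P$, i.e. when $s_j = j$ for some $j \in [1,n]$. Since $s_j$ is described by a different formula according to whether $j < i$, $j = i$, or $j > i$, I would split the search for a fixed point of $s$ into exactly these three ranges and translate each into a condition on $J_0(t)$, on $k$, and on $J_1(t)$ respectively.

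In the range $j < i$ we have $s_j = t_j$, so a fixed point occurs there iff $t_j = j$ for some $j \le i-1$, i.e. iff $J_0(t) \cap [1,i-1] \neq \emptyset$. As $J_0(t) \subseteq [1,n-1]$, this is equivalent to $\min J_0(t) \le i-1$; using the convention $\rho(t) = \min(J_0(t) \cup \{n\})$, and noting that $\rho(t) \le n-1$ exactly when $J_0(t) \neq \emptyset$, this says precisely $i \in [\rho+1, n]$ (when $J_0(t) = \emptyset$ both sides are empty, and when $i = 1$ the range is empty while $1 \notin [\rho+1,n]$ since $\rho \ge 1$). For $j = i$ we have $s_i = k$, which is a fixed point iff $i = k$. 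In the range $j > i$ we have $s_j = t_{j-1}$; writing $m = j-1 \in [i, n-1]$, a fixed point occurs there iff $t_m = m+1$ for some $m \ge i$, i.e. iff $J_1(t) \cap [i,n-1] \neq \emptyset$, which by the convention $\lambda(t) = \max(J_1(t) \cup \{0\})$ is precisely $i \in [1,\lambda]$ (again the degenerate case $J_1(t) = \emptyset$ and the boundary value $i = n$ match up correctly).

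Combining the three cases gives $i \in M(t)$ iff $i \in [1,\lambda] \cup [\rho+1,n] \cup \{k\}$, which is the assertion. The only point that calls for care is the bookkeeping: the index shift in \eqref{eq:sigma_i description} between positions below and above $i$, and the matching of the degenerate cases $J_0(t) = \emptyset$, $J_1(t) = \emptyset$ against the padding by $\{n\}$ and $\{0\}$ in the definitions of $\rho$ and $\lambda$. I do not anticipate any genuine obstacle; the statement is purely combinatorial and the case split is forced by the piecewise description of $\sigma_i(t)$.
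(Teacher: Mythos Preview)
Your proposal is correct and takes essentially the same approach as the paper: both arguments split the search for a fixed point of $\sigma_i(t)$ according to whether $j<i$, $j=i$, or $j>i$, and translate these into conditions on $J_0(t)$, on $k$, and on $J_1(t)$ respectively. The only difference is organizational: the paper proves the two inclusions separately (exhibiting the specific witnesses $j=\rho$ and $j=\lambda+1$ for the containment $\supseteq$), whereas you give a single biconditional characterization in each range via $\min J_0(t)$ and $\max J_1(t)$, which is slightly more streamlined but not substantively different.
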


\begin{proof}
We start by proving the inclusion $M(t) \supseteq [1,\lambda] \cup [\rho+1,n] \cup \{k\}$.
Write $t=\IW{t_1\,\cdots \,t_{n-1}}$.
First, $k \in M(t)$ since $\sigma_k(t) \in P$ by \eqref{eqn:sigma_k in P}.  

Suppose that $[1,\lambda] \neq \emptyset$ and consider some $i \in [1,\lambda]$.
Then $\lambda \geq 1$ so $J_1(t) \neq \emptyset$ and $\lambda \in J_1(t)$.
Set $s=\sigma_i(t)$.
Notice that $\lambda \leq n-1$ and that by \eqref{eq:sigma_i description}, $s_{\lambda+1}=t_\lambda=\lambda+1$  because $\lambda \in J_1(t)$.
It follows that $s \in P$, hence $ i \in M(t)$.

Suppose that $[\rho+1,n]$ is not empty and consider  some $\rho+1 \leq i \leq n$.
Then $\rho \leq n-1$, so $J_0(t) \neq \emptyset$ and $\rho \in J_1(t)$.
Set $s=\sigma_i(t)$.
Since $\rho \geq 1$ and $\rho \leq i-1$ it follows from \eqref{eq:sigma_i description} that $s_\rho = t_\rho = \rho$ since $\rho \in J_0(t)$.
It follows that $s \in P$, hence $i \in M(t)$.

For the inclusion $M(t) \subseteq [1,\lambda] \cup [\rho+1,n] \cup \{k\}$, suppose that $i \in M(t)$ and set $s=\sigma_i(t)$.
Then $s \in P$, so $s_j=j$ for some $1 \leq j \leq n$.
If $1 \leq j \leq i-1$ then \eqref{eq:sigma_i description} implies that $t_j=s_j=j$ so $j \in J_0(t)$.
Hence, $\rho \leq j  \leq i-1$ and it follows that $i \in [\rho+1,n]$.
If $j=i$ then $s_i=i$ so $i=k$ by \eqref{eq:sigma_i description}.
If  $i+1 \leq j \leq n$ then \eqref{eq:sigma_i description} implies that $t_{j-1}=s_j=j$.
In particular $j-1 \in J_1(t) \neq \emptyset$, hence $\lambda \geq j-1 \geq i$ which implies $i \in [1,\lambda]$.
\end{proof}

\begin{definition}\label{D:excess}
The {\em excess} of $t \in F$ is
\[
\epsilon(t) = \lambda(t)-\rho(t).
\]
Set for every integer $e$ and every $1 \leq k \leq n$
\begin{align*}
& F^e = \{ t \in F : \epsilon(t) \leq e\}, \\
& F^e(k)= F^e \cap F(k).
\end{align*}
\end{definition}

It is clear from \eqref{eqn:lambda rho bounds} that for any $t \in F$,
\[
-n \leq \epsilon(t) \leq n-2,
\]
and therefore that 
\begin{equation}\label{eqn:F^e chain of inclusion}
\emptyset = F^{-n-1} \subseteq F^{-n} \subseteq \dots \subseteq F^{n-2} = F.
\end{equation}

\begin{definition}
Let $(W,\leq_L)$ denote the set of {\em all} words, including the empty word, in the alphabet $[1,n-1]=\{1,\dots,n-1\}$ equipped with the lexicographical (from left to right) order $\leq_L$.
We write elements of $W$ as $r$-tuples $(j_1,\dots,j_r)$.
We also write $w <_L w'$ for $w \leq_L w'$ and $w \neq w'$ (in the lexicographical order on $W$).

Equip $W \times W$ with the usual partial order $\preceq_L$ of the product.
\end{definition}

\begin{definition}
Consider a subset $J \subseteq \{1,\dots,n-1\}$.
Define $\dell(J) \in W$ and $\delr(J) \in W$, words of length $|J|$ in $W$, as follows.
\begin{itemize}
\item
Write $J \cup \{0\} =\{j_1 , \dots , j_p,j_{p+1}\}$ where $p\geq 0$ and $j_1> \dots > j_p>j_{p+1}=0$, and set
\[
\dell(J) = (j_1-j_2\, , \, \cdots, \, j_{p-1}-j_p\, , \, j_p-j_{p+1}).
\]
\item
Write $J \cup \{n\} =\{j_1,\dots,j_p,j_{p+1}\}$ where $p \geq 0$ and $j_1< \dots < j_p<j_{p+1}=n$, and set
\[
\delr(J) = (j_2-j_1, \, \cdots \, , \, j_{p}-j_{p-1} \, , \, j_{p+1}-j_p).
\]
\end{itemize}
\end{definition}

\begin{definition}\label{def:omega(t)}
For any $t \in F$ define $\omega(t) \in W \times W$ by
\[
\omega(t) = (\dell(J_1(t)),\delr(J_0(t))).
\]
\end{definition}


\begin{lemma}\label{L:lambda<rho i<=lambda}
Fix some $1 \leq k \leq n$.
Consider some $t \in F(k)$ and $1 \leq i \leq \lambda(t)$ such that $i \neq k$, and set $s=\sigma_i(t)$.
Then $J_1(t) \cap [i,n-1] \neq \emptyset$ and we may set 
\begin{eqnarray*}
b &=& \min \ J_1(t) \cap [i,n-1] \qquad \text{and}
\\
t' &=& \partial_{b+1}(s).
\end{eqnarray*}
Then the following hold: $i \leq b \leq \lambda(t)$ and $t' \in F(b+1)$ and 
\[
J_0(t') = J_0(t) \bigcap \left([\rho(t),i-1] \cup [\max\{\rho(t),b+1\},n-1]\right).
\]
Moreover, 
\begin{enumerate}[label=(\roman*)]
\item \label{item:b=lambda 1}
if $b=\lambda(t)$ then $\lambda(t')< \lambda(t)$ (possibly $\lambda(t')=0$) and, 

\item \label{item:b<lambda 1} 
if $b<\lambda(t)$ then $\lambda(t')=\lambda(t)$ and $\dell(J_1(t)) <_L \dell(J_1(t'))$ in $W$.
\end{enumerate}
\end{lemma}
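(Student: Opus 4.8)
The plan is to unwind the definitions of $\sigma_i$, $\partial_{b+1}$, $J_0$, $J_1$, $\lambda$, $\rho$, $\dell$ and $\delr$ and track what happens to the coordinates of $t$ as we pass first to $s = \sigma_i(t)$ (inserting the symbol $k$ at position $i$) and then to $t' = \partial_{b+1}(s)$ (deleting the entry in position $b+1$ of $s$). The key observation driving everything is that $b+1 > i$, so inserting $k$ at position $i$ shifts the entries of $t$ originally in positions $\geq i$ up by one; in particular $s_{j} = t_{j-1}$ for $i < j \leq n$. Since $b = \min J_1(t) \cap [i,n-1]$ we have $t_b = b+1$, hence $s_{b+1} = t_b = b+1$, so $s \in P$ and indeed $s = \sigma_{b+1}(t)$ is the unique element of $S$ above $t$ with value $b+1$ in position $b+1$; this immediately gives $t' = \partial_{b+1}(\sigma_{b+1}(t)) \in F(b+1)$. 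The nonemptiness of $J_1(t) \cap [i,n-1]$ follows because $i \leq \lambda(t) = \max J_1(t)$, so $\lambda(t) \in J_1(t) \cap [i,n-1]$, which also shows $i \leq b \leq \lambda(t)$.

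Next I would compute $J_0(t')$ explicitly. The word $t'$ agrees with $t$ in positions $[1,i-1]$, then (because the insertion at $i$ and the deletion at $b+1$ cancel out over the block $[i,b]$) $t'$ has $t_i,\dots,t_{b-1}$ shifted: more carefully $t'_j = t_j$ for $1 \le j \le i-1$, $t'_j = t_{j}$ for... — the cleanest bookkeeping is: for $j \in [1,i-1]$, $t'_j = t_j$; for $j \in [i, b-1]$, $t'_j = t_{j}$ as well after the insert-then-delete collapses (here one must be careful, since the inserted $k$ sits at position $i$ and is removed only if $b+1 = i$, which is excluded, so actually for $j \in [i,b]$ the entry $t'_j$ is $k$ if $j=i$ and $t_{j-1}$ if $i<j\le b$) and for $j \in [b+1, n-1]$, $t'_j = t_j$. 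So $t'$ splits into three pieces: positions $[1,i-1]$ carry $t_1,\dots,t_{i-1}$ unchanged; positions $[i,b]$ carry $k, t_i, \dots, t_{b-1}$; positions $[b+1,n-1]$ carry $t_{b+1},\dots,t_{n-1}$ unchanged (note $t_b = b+1$ was the entry deleted). A position $j$ is in $J_0(t')$ iff $t'_j = j$. In the first block this happens iff $t_j = j$, i.e. $j \in J_0(t)$, and such $j$ satisfies $j \geq \rho(t)$ automatically, so this block contributes $J_0(t) \cap [\rho(t),i-1]$. In the middle block $[i,b]$ one checks no fixed point can occur: $t'_i = k \neq i$ since $i \neq k$ by hypothesis, and for $i < j \le b$ we have $t'_j = t_{j-1}$, which cannot equal $j$ since $j-1 < b = \min J_1(t)\cap[i,n-1]$ and... (this needs the observation that $t_{j-1}=j$ would put $j-1\in J_1(t)\cap[i,n-1]$, contradicting minimality of $b$, provided $j-1 \geq i$, which holds). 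In the third block $[b+1,n-1]$, $t'_j = t_j$, so $j \in J_0(t')$ iff $j \in J_0(t) \cap [b+1,n-1]$, and again $j \geq \rho(t)$ holds iff $j \in J_0(t) \cap [\max\{\rho(t),b+1\},n-1]$. Combining the three blocks gives exactly the claimed formula for $J_0(t')$.

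Finally I would treat $J_1(t')$ and split on the two cases. In all three blocks one computes $J_1(t') = \{j : t'_j = j+1\}$ similarly: the first block contributes $J_1(t) \cap [1,i-1]$, the middle block $[i,b]$ contributes nothing except possibly... (at $j=i$, $t'_i = k = i+1$ is possible only if $k = i+1$, but then $s_i = k = i+1$, and one must check whether this produces a spurious element of $J_1(t')$ — I expect the hypotheses $i \le \lambda(t)$, $i \ne k$ together with minimality of $b$ to rule out the delicate overlaps, but this is the point needing the most care), and the third block contributes $J_1(t) \cap [b+1,n-1] = \emptyset$ by maximality of $b$ among... wait, $b$ is minimal; rather $[b+1,\lambda(t)]$ may still meet $J_1(t)$. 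So in the case $b < \lambda(t)$ there is some element of $J_1(t)$ in $(b,\lambda(t)]$, which survives into $J_1(t')$, giving $\lambda(t') = \lambda(t)$; and since the largest block of consecutive structure of $J_1(t')$ near its top end has gained a longer initial run (the gap from $\lambda(t)$ down to the next element of $J_1(t')$ is strictly larger than the corresponding gap $\lambda(t)$-to-$b$ in $J_1(t)$, because $b$ has been removed), one gets $\dell(J_1(t)) <_L \dell(J_1(t'))$: the first coordinate of $\dell(J_1(t'))$ strictly exceeds that of $\dell(J_1(t))$, or they agree down some prefix and then $t'$ wins — this is where one compares the two descending-gap sequences coordinatewise. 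In the case $b = \lambda(t)$, every element of $J_1(t)$ lies in $[1,b] = [1,\lambda(t)]$, and passing to $t'$ removes position $b$ and shifts the entries originally in $[i,b-1]$ down by one while the block $[1,i-1]$ is untouched; an element $j \in J_1(t)$ with $j \le i-1$ stays in $J_1(t')$, while an element $j \in J_1(t) \cap [i, b-1]$ becomes $t'_{j+1} = t_j = j+1$, i.e. contributes $j+1$... hmm, this would not decrease $\lambda$. The resolution must be that $J_1(t) \cap [i,b-1] = \emptyset$: indeed $b = \min J_1(t)\cap[i,n-1]$ means there is no element of $J_1(t)$ in $[i,b-1]$ at all. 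Hence every element of $J_1(t)$ above or equal to $i$ is exactly $b$, which gets deleted, so $J_1(t') \subseteq [1,i-1]$, forcing $\lambda(t') \le i-1 < i \le b = \lambda(t)$, with $\lambda(t') = 0$ precisely when $J_1(t) \subseteq \{b\}$; this is case \ref{item:b=lambda 1}. The main obstacle throughout is the careful index bookkeeping in the overlap region $[i,b]$ — getting the "insert at $i$, delete at $b+1$" composition right, and verifying that no accidental fixed points or $+1$-points are created there — so I would prove a small auxiliary formula for the coordinates $t'_j$ in each of the three blocks first, and then read off $J_0(t')$, $J_1(t')$, $\lambda(t')$ and the $\dell$-comparison mechanically from it.
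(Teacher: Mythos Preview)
Your approach is the same as the paper's, and your block decomposition of $t'$ (namely $t'_j = t_j$ for $j \in [1,i-1]$, $t'_i = k$, $t'_j = t_{j-1}$ for $j \in [i+1,b]$, $t'_j = t_j$ for $j \in [b+1,n-1]$) together with your computation of $J_0(t')$ are correct and match the paper exactly.

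However, your argument for case \ref{item:b=lambda 1} contains a genuine error. You conclude $J_1(t') \subseteq [1,i-1]$ by reasoning that the only element of $J_1(t)$ in $[i,n-1]$ is $b$ and that it ``gets deleted''. This conflates $J_1(t)$ with $J_1(t')$: membership in $J_1(t')$ is the condition $t'_j = j+1$, and in the middle block $[i,b]$ this can hold at positions that were \emph{not} in $J_1(t)$. Nothing rules out $k = i+1$ (giving $i \in J_1(t')$) or $t_{j-1} = j+1$ for some $i < j < b$ (giving $j \in J_1(t')$). For instance, with $n=6$, $k=6$, $t = \IW{3\,1\,2\,5\,4}$ and $i=1$, one has $b = \lambda(t) = 4$ and $t' = \IW{6\,3\,1\,2\,4}$, so $J_1(t') = \{2\} \not\subseteq [1,0]$. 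The conclusion $\lambda(t') < \lambda(t)$ is nevertheless true, but for a weaker reason: one only needs $b = \lambda(t) \notin J_1(t')$. When $b > i$ this follows from injectivity of $t$ (since $t'_b = t_{b-1} \neq t_b = b+1$); when $b = i$ it follows because $t'_b = k$ while $k \neq t_b = b+1$ (as $t \in F(k)$ so $k$ is not a letter of $t$). Combined with $J_1(t') \cap (b,n-1] = J_1(t) \cap (\lambda,n-1] = \emptyset$, this gives $\lambda(t') \leq b-1$.

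Your sketch of case \ref{item:b<lambda 1} is likewise imprecise: you assert that the gap ``from $\lambda(t)$ down to the next element'' increases, but $b$ need not be the element of $J_1(t)$ immediately below $\lambda(t)$. The correct argument (which is the paper's) is that $J_1(t')$ and $J_1(t)$ agree on $(b,n-1]$ since $t'_j = t_j$ there, while $b \in J_1(t) \setminus J_1(t')$ by the same two-subcase analysis as above; hence if $j_1 > \cdots > j_{q-1} > j_q = b$ are the elements of $J_1(t)$ in $[b,n-1]$, the first $q-2$ entries of $\dell(J_1(t))$ and $\dell(J_1(t'))$ coincide and the $(q-1)$st entry is strictly larger for $t'$.
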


\begin{proof}
Set $\lambda=\lambda(t)$ and $\rho=\rho(t)$.
By assumption $\lambda \geq i \geq 1$ so by Definition \ref{def:J_0 J_1 rho lambda} $J_1(t) \neq \emptyset$ and $\lambda \in J_1(t)$, hence $\lambda \in J_1(t) \cap [i,n-1]$.
Thus, $b$ is well defined and 
\[
1 \leq i \leq b \leq \lambda.
\]
By the definition of $b$ and $J_1(t)$, for any $1 \leq j \leq n-1$,
\begin{equation}\label{E:b property 1}
i \leq j <b \implies t_j \neq j+1.
\end{equation}
Additionally, $\lambda \leq n-1$ by \eqref{eqn:lambda rho bounds} so $2 \leq b+1 \leq n$.
Therefore $t'=\partial_{b+1}(s)$ is well defined and by inspection
\[
t'=\IW{t_1,\dots,t_{i-1},k,t_i,\dots,\widehat{t_b},\dots,t_{n-1}}.
\]
Thus,
\begin{equation}\label{E:explicit t' 1}
t'_j=\left\{
\begin{array}{ll}
t_j       & \text{if $1 \leq j \leq i-1$} \\
k         & \text{if $j=i$} \\
t_{j-1}    & \text{if $i+1 \leq j \leq b$} \\
t_j       & \text{if $b< j \leq n-1$}
\end{array}\right.
\end{equation}
By definition, $b \in J_1(t)$, so $t_b=b+1$.
Therefore 
\[
t' \in F(b+1).
\]

We examine $t'_j$ for all $1 \leq j \leq n-1$.
If $b<j \leq n-1$ then by \eqref{E:explicit t' 1} $t'_j=t_j$.
It follows that $J_0(t') \cap (b,n-1] = J_0(t) \cap (b,n-1]$.

If $i+1 \leq j \leq b$ equations \eqref{E:explicit t' 1} and \eqref{E:b property 1} imply $t'_j=t_{j-1} \neq j$. 
If $j=i$ then by \eqref{E:explicit t' 1} and the hypotheses $t'_j=k \neq i =j$.
It follows that $[i,b] \cap J_0(t') = \emptyset$.

If $1 \leq j \leq i-1$ then by \eqref{E:explicit t' 1} $t'_j=t_j$ so $J_0(t') \cap [1,i-1] = J_0(t) \cap [1,i-1]$.
We deduce that
\[
J_0(t') = J_0(t) \bigcap \Big([1,i-1] \cup (b,n-1]\Big).
\]
By Definition \ref{def:J_0 J_1 rho lambda} $J_0(t) \subseteq [\rho,n-1]$, and we deduce that
\[
J_0(t') = J_0(t) \cap \Big( [\rho,i-1] \cup [ \max\{b+1,\rho\},n-1] \Big).
\]
Set $\lambda'=\lambda(t')$.
Consider an arbitrary $\lambda < j \leq n-1$.
Then $b<j \leq n-1$ since $b \leq \lambda$ and $j \notin J_1(t)$ by Definition \ref{def:J_0 J_1 rho lambda}.
By \eqref{E:explicit t' 1} $t'_j=t_j \neq j+1$, hence $j \notin J_1(t')$.
We deduce that
\[
J_1(t') \subseteq [1,\lambda].
\]
We are ready to prove items \ref{item:b=lambda 1} and \ref{item:b<lambda 1}.
Recall that $i \leq b \leq \lambda$.
We examine three cases (Cases 1 and 2 cover $b=\lambda$ and Case 3 covers $b< \lambda$).

\medskip
\noindent
Case 1: $b=\lambda \geq i+1$.
Then $2 \leq b \leq n-1$ and, since $t$ in an injective word and $b \in J_1(t)$ it follows from \eqref{E:explicit t' 1} that $t'_b=t_{b-1} \neq t_b=b+1$.
So $b \notin J_1(t')$.
But $b=\lambda$ so $J_1(t') \subseteq [1,\lambda)$.
In particular $\lambda' = \min \, J_1(t') \cup \{0\} < \lambda$.

\medskip
\noindent
Case 2: $b=\lambda=i$.
Then $t'_i=k$.
In addition, $t_i = t_\lambda = \lambda+1=i+1$ because $\lambda \in J_1(t)$.
Since $t \in F(k)$ it follows that $k \neq t_i=i+1$.
Therefore $t'_i \neq i+1$, namely $i \notin J_1(t')$ and since $\lambda=i$, we get $J_1(t') \subseteq [1,\lambda)$, hence $\lambda' = \min \, J_1(t') \cup \{0\} < \lambda$.

\medskip
\noindent
Case 3: $b<\lambda$.
Write $J_1(t)\cup\{0\} = \{j_1> \cdots > j_p>j_{p+1}=0\}$ where $p \geq 1$ (since $J_1(t) \neq \emptyset$) and $j_1=\max \, J_1(t)=\lambda$ and $b=j_q$ for some $2 \leq q \leq p$ (since $b \in J_1(t)$ and $b<\lambda$).
Write $J_1(t') \cup\{0\} = \{ j'_1> \cdots > j'_r> j'_{r+1}=0\}$.

By \eqref{E:explicit t' 1} $j'_j=j_j$ for any $b<j \leq n-1$.
Hence,
\begin{equation}\label{eqn:J_1 cap [b,n-1] = J_1' cap [b,n-1]}
J_1(t') \cap (b,n-1] = J_1(t) \cap (b,n-1] = \{ j_1> j_2> \dots > j_{q-1}\}.
\end{equation}
In particular 
$r \geq q-1$.

Recall that $i \leq b \leq \lambda$.
Suppose first that $i+1 \leq b \leq \lambda$.
Then $b \geq 2$, and since $t$ is an injective word and $b \in J_1(t)$ it follows from \eqref{E:explicit t' 1} that $t'_b=t_{b-1} \neq t_b=b+1$, and therefore $b \notin J_1(t')$.
Next, suppose that $b=i$.
then by \eqref{E:explicit t' 1} $t'_b=k$.
But $b \in J_1(t)$ so $t_b=b+1$ and also $t_b \neq k$ since $t \in F(k)$.
We deduce that $t'_b \neq b+1$ and therefore $b \notin J_1(t')$.
To conclude, we have just shown that 
\[
b \notin J_1(t').
\]
Thus, $J_1(t') = J_1(t') \cap [1,b) \, \cup \, J_1(t') \cap (b,n-1]$.
Together with \eqref{eqn:J_1 cap [b,n-1] = J_1' cap [b,n-1]}, we get that
\[
J_1(t') \cup \{0\} = \{j_1> \cdots > j_{q-1} > j'_q> \cdots > j'_{r+1}=0\}
\]
where $j'_q=\max J_1(t') \cap [1,b) \cup\{0\}$.
Thus, $j'_q<b=j_q$.
Now,
\begin{eqnarray*}
\dell(J_1(t)) &=& (j_1-j_2\, , \, \cdots \, , \, j_{q-1}-j_q \, , \, \cdots ) \\
\dell(J_1(t')) &=& (j_1-j_2\, , \, \cdots \, , \, j_{q-1}-j'_q \, , \, \cdots )
\end{eqnarray*}
and since $j'_q<j_q$ it follows that $\dell(J_1(t)) <_L \dell(J_1(t'))$.
\end{proof}

The next lemma is similar. 
We give its proof for the sake of completeness.

\begin{lemma}\label{L:lambda<rho i>rho}
Fix some $1 \leq k \leq n$.
Consider some $t \in F(k)$ and some $\rho+1 \leq i \leq n$ such that $i \neq k$.
Set $s=\sigma_i(t)$.
Then $J_0(t) \cap [1,i-1] \neq \emptyset$ and we may set
\begin{eqnarray*}
b &=& \max \, J_0(t) \cap [1,i-1] \\
t' &=& \partial_{b}(s).
\end{eqnarray*}
Then the folowing hold: $\rho(t) \leq b \leq i-1$ and $t' \in F(b)$ and 
\[
J_1(t')=J_1(t) \bigcap \Big( [1,\min\{\lambda(t),b-1\}] \cup [i,\lambda(t)]\Big).
\]
Moreover, 
\begin{enumerate}[label=(\roman*)]
\item 
\label{item:b=rho 2}
if $b=\rho(t)$ then $\rho(t') > \rho(t)$ (possibly $\rho(t')=n$) and, 

\item \label{item:b>rho 2} 
if $b>\rho(t)$ then $\rho(t')=\rho(t)$ and $\delr(J_0(t)) <_L \delr(J_0(t'))$ in $W$.
\end{enumerate}
\end{lemma}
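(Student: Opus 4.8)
The plan is to mirror the proof of Lemma \ref{L:lambda<rho i>rho} with the roles of $J_0$ and $J_1$, and of ``left'' and ``right'', interchanged, and to replace ``insert $k$ at position $i$ then delete at position $b+1$'' by ``insert $k$ at position $i$ then delete at position $b$''. First I would set $\lambda=\lambda(t)$, $\rho=\rho(t)$, and observe that the hypothesis $\rho+1 \leq i$ forces $\rho \leq i-1$, and since $\rho \leq n-1$ Definition \ref{def:J_0 J_1 rho lambda} gives $\rho \in J_0(t)$; hence $\rho \in J_0(t) \cap [1,i-1]$, so that set is nonempty and $b = \max J_0(t) \cap [1,i-1]$ is well defined with $\rho \leq b \leq i-1$. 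Then $b \geq \rho \geq 1$ and $b \leq i-1 \leq n-1$, so $t'=\partial_b(s)$ makes sense. Writing out $s=\sigma_i(t)$ via \eqref{eq:sigma_i description} and then deleting the $b$-th entry, I would record the explicit formula
\[
t'_j=\left\{
\begin{array}{ll}
t_j       & \text{if $1 \leq j \leq b-1$} \\
t_j       & \text{if $b \leq j \leq i-2$} \\
k         & \text{if $j=i-1$} \\
t_{j}    & \text{if $i \leq j \leq n-1$},
\end{array}\right.
\]
(I will need to recompute the exact index shifts carefully, but the structure is: entries strictly left of $b$ are unchanged, the entry $t_b$ is dropped, entries between $b$ and $i$ are shifted left by one, position $i-1$ becomes $k$, and entries from $i$ onward are unchanged). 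Since $b \in J_0(t)$ means $t_b=b$, and $t$ is injective, $t'$ is again an injective word of length $n-1$, and the missing symbol is $t_b=b$, so $t' \in F(b)$.

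Next I would compute $J_1(t')$ symbol by symbol across the four ranges above. For $i \leq j \leq n-1$ we have $t'_j=t_j$, so $J_1(t') \cap [i,n-1] = J_1(t) \cap [i,n-1]$; since $\lambda=\max J_1(t)$ and every element of $J_1(t)$ in this range is $\leq \lambda$, this is $J_1(t) \cap [i,\lambda]$. At $j=i-1$, $t'_{i-1}=k$; because $t \in F(k)$ and $b \in J_0(t)$ with $b$ the maximum of $J_0(t) \cap [1,i-1]$, one checks $k \neq i$ unless forced otherwise — more precisely I need $t'_{i-1}=k = (i-1)+1 = i$ to fail, and since $k$ is the missing symbol while position $i$ of $s$ would have to relate to it; here I'd argue directly that if $k=i$ then $\sigma_i(t)$ has $s_i = k = i$, which is fine, but the point is whether $i-1 \in J_1(t')$, i.e. whether $k=i$; this can genuinely happen, so $i-1 \in J_1(t')$ iff $k=i$, and I must check this does not break the stated formula (indeed $[1,\min\{\lambda,b-1\}] \cup [i,\lambda]$ excludes $i-1$, so I need $k \neq i$ — this should follow because $b$ being the max of $J_0(t)\cap[1,i-1]$ and $\rho \leq b$... actually I must verify $k \neq i$ holds, likely using that position $i$ in $t$ would be $t_i$ and $t \in F(k)$; I'll pin this down). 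For the range $b \leq j \leq i-2$, $t'_j = t_{j+1}$, and by maximality of $b$ no index in $[b+1,i-1]$ lies in $J_0(t)$; I'd use this together with the definition of $J_1$ to show no such $j$ contributes, yielding $[b,i-2] \cap J_1(t') = \emptyset$ except possibly at $j=b-1$ boundary effects. For $1 \leq j \leq b-1$, $t'_j=t_j$, so $J_1(t') \cap [1,b-1] = J_1(t) \cap [1,b-1]$. Assembling these gives $J_1(t') = J_1(t) \cap ([1,b-1] \cup [i,\lambda])$; intersecting with the a priori bound $J_1(t') \subseteq [1,\lambda]$ (which I verify by noting entries beyond $\lambda$ are unchanged and not in $J_1(t)$) gives the stated $J_1(t') = J_1(t) \cap ([1,\min\{\lambda,b-1\}] \cup [i,\lambda])$.

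Finally, for the dichotomy: if $b=\rho$, then $\rho \notin J_0(t')$ (the missing symbol argument, plus the explicit formula shows $t'_\rho \neq \rho$: either $\rho \leq b-1$ is vacuous since $b=\rho$, or $\rho=b$ is the deleted position so the new $t'_\rho$ is $t_{\rho+1}\neq \rho$ by injectivity, or if $\rho=i-1$ then $t'_\rho=k\neq \rho$), and since $J_0(t) \subseteq [\rho,n-1]$ one gets $J_0(t') \subseteq [\rho+1,n-1]$, hence $\rho(t') = \min J_0(t') \cup \{n\} > \rho$. If $b>\rho$, I write $J_0(t)\cup\{n\} = \{j_1<\dots<j_p<j_{p+1}=n\}$ with $j_1=\rho$ and $b=j_q$ for some $2\leq q\leq p$ (using $b\in J_0(t)$, $b>\rho$). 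The analysis above shows $J_0(t') \cap [b+1,n-1]$ loses exactly $b=j_q$... no: rather $J_0(t')$ retains $j_1,\dots,j_{q-1}$ on the left, drops $j_q=b$ and everything in $[b+1,i-1]$ (which contains no $J_0(t)$ elements by maximality of $b$), and on $[i,n-1]$ agrees with $J_0(t)$; so the next element of $J_0(t')$ after $j_{q-1}$ is some $j'_q \geq i > j_q = b$. Then $\delr(J_0(t))$ and $\delr(J_0(t'))$ agree in their first $q-2$ coordinates, and the $(q-1)$-st coordinate changes from $j_q - j_{q-1}$ to $j'_q - j_{q-1} > j_q - j_{q-1}$, giving $\delr(J_0(t)) <_L \delr(J_0(t'))$; also $\rho(t')=\min J_0(t') = j_1 = \rho$ since $j_1=\rho$ survives. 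The main obstacle, as in the companion lemma, is bookkeeping: getting the index shifts in the explicit formula for $t'$ exactly right across the four ranges, and correctly handling the three sub-cases $b \leq i-2$ versus $b=i-1$ (and the position $i-1$ where the value $k$ lands) when checking the boundary membership in $J_0(t')$ and $J_1(t')$ — these are the places where an off-by-one error would corrupt the stated formulas. I expect no conceptual difficulty beyond faithfully transposing the argument for Lemma \ref{L:lambda<rho i<=lambda} / \ref{L:lambda<rho i>rho}.
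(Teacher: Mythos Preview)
Your approach is the same as the paper's, and the overall structure is correct. Two small slips are worth flagging.

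First, the worry about whether $k=i$ at position $j=i-1$ is unnecessary: the hypothesis of the lemma explicitly includes $i\neq k$, so $t'_{i-1}=k\neq i=(i-1)+1$ immediately, and $i-1\notin J_1(t')$.

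Second, in the case $b>\rho$ you assert that the next element of $J_0(t')$ after $j_{q-1}$ satisfies $j'_q\ge i$. This is too strong and can fail: new fixed points of $t'$ may appear in $(b,i-1]$. For instance with $n=6$, $k=4$, $t=\langle 1,5,3,2,6\rangle$, $i=5$ one finds $b=3$, $t'=\langle 1,5,2,4,6\rangle$, and $4\in J_0(t')$ while $4<i$. The paper (and your argument) only needs $j'_q>b=j_q$, which follows from $b\notin J_0(t')$ (checked in the two sub-cases $b\le i-2$ and $b=i-1$) together with $J_0(t')\cap[1,b)=J_0(t)\cap[1,b)$; that already yields the lexicographic inequality.

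Finally, the displayed formula for $t'_j$ has $t_j$ in the range $b\le j\le i-2$ where it should read $t_{j+1}$; your verbal description (``shifted left by one'') is the correct one, matching the paper's \eqref{E:t' explicit 2}.
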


\begin{proof}
Set $\rho=\rho(t)$ and $\lambda=\lambda(t)$.
The assumption $\rho+1 \leq i$ implies $\rho \leq n-1$ so by Definition \ref{def:J_0 J_1 rho lambda} $J_0(t) \neq \emptyset$ and  $\rho \in J_0(t) \cap [1,i-1]$ since $\rho \leq i-1$.
In particular this set is not empty, $b$ is well defined and by \eqref{eqn:lambda rho bounds}
\[
1 \leq \rho \leq b \leq i-1.
\]
By definition of $b$ and of $J_0(t)$, for any $1 \leq j \leq n-1$,
\begin{equation}\label{E:b property 2}
b< j \leq i-1 \implies t_j \neq j.
\end{equation}
Now, $t'=\partial_b(s)$ is well defined since $1 \leq b \leq n-1$, and by inspection
\[
t'=\IW{t_1,\dots,\widehat{t_b},\dots,t_{i-1},k,t_i,\dots,t_{n-1}}.
\]
\begin{equation}\label{E:t' explicit 2}
t'_j =
\left\{
\begin{array}{ll}
t_j        & \text{if $1 \leq j < b$} \\
t_{j+1}     & \text{if $b \leq j \leq i-2$} \\
k          & \text{if $j=i-1$} \\
t_j        & \text{if $i \leq j \leq n-1$}
\end{array}
\right.
\end{equation}
Since $b \in J_0(t)$ we get $t_b=b$ so 
\[
t' \in F(b).
\]
Consider an arbitrary $1 \leq j \leq n-1$.
If $j \geq i$ then by \eqref{E:t' explicit 2} $t'_j=t_j$. 
It follows that  $J_1(t') \cap [i,n-1] = J_1(t) \cap [i,n-1]$.

If $j=i-1$ then $t'_j=k\neq i=j+1$ by \eqref{E:t' explicit 2}.
If $b \leq j \leq i-2$ then $b<j+1 \leq i-1$ so \eqref{E:t' explicit 2} and \eqref{E:b property 2} imply $t'_j=t_{j+1} \neq j+1$.
It follows that $[b,i-1] \cap J_1(t')=\emptyset$.

If $j<b$ then $t'_j=t_j$ so $J_1(t') \cap [1,b)= J_1(t) \cap [1,b)$.
We deduce that
\[
J_1(t') = J_1(t) \bigcap \Big( [1,b) \cup [i,n-1]\Big).
\]
Since  $J_1(t) \subseteq [1,\lambda]$ it follows by Definition \ref{def:J_0 J_1 rho lambda} that
\[
J_1(t') = J_1(t) \bigcap \Big( [1,\min\{\lambda,b-1\}] \cup [i,\lambda]\Big).
\]
Set $\rho'=\rho(t')$.
Consider an arbitrary $1 \leq j<\rho$.
Then $j< b$ (since $\rho \leq b$) and $j \notin J_0(t)$ by Definition \ref{def:J_0 J_1 rho lambda}, so $t'_j=t_j \neq j$ by \eqref{E:t' explicit 2}.
Hence
\begin{equation}\label{eqn:J_0(t') subset [rho,n-1]}
J_0(t') \subseteq [\rho,n-1].
\end{equation}
We are ready to prove items \ref{item:b=rho 2} and \ref{item:b>rho 2}. 
Recall that $\rho \leq b \leq i-1$.
We consider three cases.
Cases 1 and 1 cover $\rho=b$ and Case 3 deals with the case $\rho<b$.

\medskip
\noindent
Case 1: $b=\rho$ and $b \leq i-2$.
Since $t$ is an injective word and $b \in J_0(t)$ we deduce from \eqref{E:t' explicit 2} that $t'_b=t_{b+1} \neq t_b=b$.
Therefore $\rho=b \notin J_0(t')$, hence $J_0(t') \subseteq (\rho,n-1]$ and consequently $\rho' = \min \, J_0(t') \cup \{n \} >\rho$ (possibly $\rho'=n$).

\medskip
\noindent
Case 2: $b=\rho$ and $b = i-1$.
Then $t'_b=t'_{i-1}=k$ by \eqref{E:t' explicit 2}.
But 
$\rho \in J_0(t)$ so $t_\rho=\rho$.
Since $t \in F(k)$ it follows that $k \neq t_{\rho}$.
In particular $t'_{\rho} \neq \rho$ so $\rho \notin J_0(t')$, and it follows that $J_0(t') \subseteq (\rho,n-1]$.
Therefore, $\rho' = \min \, J_0(t') \cup \{n \} > \rho$.

\medskip
\noindent
Case 3: $b>\rho$.
Write $J_0(t) \cup \{n \} =\{j_1<\dots<j_p<j_{p+1}=n\}$ where $p \geq 1$ (since $J_0(t) \neq \emptyset$) and $j_1=\rho$ (since $\rho=\min J_0(t)$).
Also, $b=j_q$ for some $2 \leq q \leq p$ since $b \in J_0(t)$ and $b>\rho$.
Write  $J_0(t') \cup \{n \}=\{j'_1< \dots < j'_r<j'_{r+1}=n\}$.

Consider an arbitrary $1 \leq j<b$.
Then $t'_j=t_j$ by \eqref{E:t' explicit 2} so
\begin{equation}\label{eqn:J_0(t') cap [1,b) = J_0(t) cap [1,b)}
J_0(t') \cap [1,b) = J_0(t) \cap [1,b) = \{ j_1<j_2 < \dots < j_{q-1}\}.
\end{equation}
In particular $r \geq q-1$.
We now examine $t'_b$.
Since $\rho \leq b \leq i-1$ we check two cases.

If $\rho \leq b \leq i-2$ then since $t$ is an injective word and since $b \in J_0(t)$, \eqref{E:t' explicit 2} implies that $t'_b=t_{b+1} \neq t_b=b$ so $b \notin J_0(t')$.
If $b = i-1$ then $t'_b=k$ by \eqref{E:t' explicit 2} and $t_b=b=i-1$ since $b \in J_0(t)$.
Also,$k \neq t_b$ because $t \in F(k)$.
Hence $t'_{i-1} \neq i-1$, i.e $b=i-1 \neq J_0(t')$ in this case too.
We deduce that 
\[
b \notin J_0(t').
\]
Therefore, $J_0(t') \cup\{n\}  = J_0(t') \cap [1,b) \cup J_0(t') \cap (b,1] \cup \{n\}$ and it follows from \eqref{eqn:J_0(t') cap [1,b) = J_0(t) cap [1,b)} that
\[
J_0(t') = \{j_1<\cdots<j_{q-1}<j'_q<\cdots<j'_{r+1}\}
\]
where $j'_q=\min J_0(t') \cap (b,n-1] \cup\{n\}$ (and recall that $q \leq r+1$). 
Thus, $j'_q >b = j_q$.
Since
\begin{eqnarray*}
\delr(J_0(t)) &=& (j_2-j_1\, , \, \cdots \, , \, j_q-j_{q-1}\, , \, \cdots) \\
\delr(J_0(t')) &=& (j_2-j_1\, , \, \cdots \, , \, j'_q-j_{q-1}\, , \, \cdots),
\end{eqnarray*}
and since $j'q>j_q$, it follows that $\delr(J_0(t)) <_L \delr(J_0(t'))$.
\end{proof}

The next two lemmas are the last ingredient in the proof of Reduction \ref{reduction 3}.

\begin{lemma}\label{L:J cap K dell}
Let $\emptyset \neq J \subseteq [1,n-1]$.
Set $\mu = \max J$.
Consider some $1 \leq a < b \leq \mu$ and set $K=[1,a] \cup [b,\mu]$.
Then $\dell(J) \leq_L \dell(J \cap K)$.
\end{lemma}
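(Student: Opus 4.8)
The plan is to read the effect of intersecting with $K$ directly off the definition of $\dell$, by tracking the decreasing enumeration of the underlying set. Write $J \cup \{0\} = \{j_1 > \dots > j_p > j_{p+1} = 0\}$, so that $\dell(J) = (j_1 - j_2, \dots, j_p - j_{p+1})$. The first observation I would make is that $J \setminus K = J \cap (a,b)$, the intersection of $J$ with an integer interval; hence the elements of $J$ that get deleted form a \emph{contiguous block} $j_q > j_{q+1} > \dots > j_{q'}$ inside the enumeration above (and if this block is empty then $J \cap K = J$ and the statement holds with equality, so we may assume it is nonempty). Since $\mu = \max J \geq b$, the element $j_1 = \mu$ lies in $[b,\mu] \subseteq K$ and is therefore never deleted; in particular $q \geq 2$, and $\mu = \max(J \cap K)$ as well.

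Next I would compare $\dell(J)$ and $\dell(J \cap K)$ position by position. The decreasing enumeration of $(J \cap K) \cup \{0\}$ is obtained from that of $J \cup \{0\}$ by removing $j_q, \dots, j_{q'}$, so its first $q-1$ terms are $j_1 > \dots > j_{q-1}$, exactly as for $J$. Consequently the first $q-2$ entries of the two words agree (both equal $j_1 - j_2, \dots, j_{q-2} - j_{q-1}$), while in position $q-1$ the word $\dell(J)$ has $j_{q-1} - j_q$ and $\dell(J \cap K)$ has $j_{q-1} - m$, where $m$ is the largest element of $(J \cap K) \cup \{0\}$ strictly below $j_{q-1}$; explicitly $m = j_{q'+1}$ (using the convention $j_{p+1} = 0$). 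Because $j_q > j_{q+1} > \dots > j_{q'} > j_{q'+1} = m$ and $j_q \geq 1$, we have $m < j_q$, hence $j_{q-1} - m > j_{q-1} - j_q$. Thus the two words first differ in position $q-1$ and $\dell(J \cap K)$ is strictly larger there, giving $\dell(J) <_L \dell(J \cap K)$; combined with the equality case this proves $\dell(J) \leq_L \dell(J \cap K)$.

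I do not anticipate a genuine obstacle; the argument is essentially bookkeeping. The two points that need care are establishing that the deleted elements occupy consecutive positions in the decreasing enumeration (so that $J$ and $J \cap K$ share the honest prefix $j_1, \dots, j_{q-1}$, which is where contiguity of the interval $(a,b)$ is used), and checking the degenerate possibilities $q = 2$ (empty shared prefix, so the words already differ in the first entry) and $q' = p$ (so that $m = 0$). Both are immediate once the contiguous-block structure is in place.
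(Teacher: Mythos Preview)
Your proof is correct and follows essentially the same argument as the paper's own proof. The only cosmetic difference is in indexing: the paper labels the surviving elements on either side of the gap as $j_q$ and $j_r$ (so the deleted block is $j_{q+1},\dots,j_{r-1}$), whereas you label the deleted block itself as $j_q,\dots,j_{q'}$; after this relabelling the two arguments coincide line for line.
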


\begin{proof}
Write $J \cup \{0\} = \{ j_1 > \cdots > j_p > j_{p+1}=0\}$ where $p \geq 1$ (since $J \neq \emptyset$).
Since $j_1=\mu$ and $j_p \geq 1$ it is clear that there are $1 \leq q < r \leq p+1$ such that
\[
J \cap K \cup \{0\} = 
\{ j_1 > \cdots >j_q > j_r > \cdots > j_{p+1}\}.
\]
If $r=q+1$ then $J \cap K = J$, in which case $\dell(J \cap K)=\dell(J)$.
If $r>q+1$ then $j_r<j_{q+1}$, so $j_q-j_{q+1}<j_q-j_r$, and since
\begin{eqnarray*}
\dell(J) &=& (j_1-j_2\, , \, \cdots \, , \, j_{q-1}-j_q \, , \, j_q-j_{q+1} \, , \, \cdots) \\
\dell(J \cap K) &=& (j_1-j_2\, , \, \cdots \, , \, j_{q-1}-j_q \, , \, j_q-j_r \, , \, \cdots) \\
\end{eqnarray*}
it follows that $\dell(J) <_L \dell(J \cap K)$.
\end{proof}

\begin{lemma}\label{L:J cap K delr}
Let $\emptyset \neq J \subseteq [1,n]$.
Set $\mu = \min J$.
Consider some $\mu \leq a < b$ and set $K=[\mu,a] \cup [b,n-1]$.
Then $\delr(J) \leq_L \delr(J \cap K)$.
\end{lemma}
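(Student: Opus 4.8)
The plan is to mirror the proof of Lemma~\ref{L:J cap K dell}, working with the $\delr$ operator instead of $\dell$ and with the increasing enumeration of $J$ instead of the decreasing one. Concretely, I would write $J \cup \{n\} = \{ j_1 < j_2 < \cdots < j_p < j_{p+1} = n\}$ with $p \geq 1$ (since $J \neq \emptyset$), so that $j_1 = \mu$ by the definition of $\mu$, and by definition $\delr(J) = (j_2 - j_1, \, \cdots, \, j_{p+1} - j_p)$.

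Next I would identify $J \cap K$ inside this enumeration. Since $K = [\mu, a] \cup [b, n-1]$ and $\mu = j_1 \leq a < b$, intersecting with $K$ removes exactly the elements of $J$ lying in the gap $(a, b)$. So there are indices $1 \leq q < r \leq p+1$ with $j_q \leq a < j_{q+1}$ and $j_{r-1} < b \leq j_r$ (taking $j_{p+1} = n$ as the right endpoint), giving
\[
J \cap K \cup \{n\} = \{ j_1 < \cdots < j_q < j_r < \cdots < j_{p+1}\}.
\]
If $r = q+1$ then $J \cap K = J$ and $\delr(J \cap K) = \delr(J)$, so equality holds. If $r > q+1$, then $j_r > j_{q+1}$, hence $j_r - j_q > j_{q+1} - j_q$, and comparing
\begin{align*}
\delr(J) &= (j_2 - j_1\, , \, \cdots \, , \, j_q - j_{q-1}\, , \, j_{q+1} - j_q\, , \, \cdots), \\
\delr(J \cap K) &= (j_2 - j_1\, , \, \cdots \, , \, j_q - j_{q-1}\, , \, j_r - j_q\, , \, \cdots),
\end{align*}
the two words agree in their first $q-1$ entries and the $q$-th entry of $\delr(J \cap K)$ is strictly larger, so $\delr(J) <_L \delr(J \cap K)$.

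This is essentially a routine transcription of the previous lemma; the only things to be careful about are the bookkeeping of where $a$ and $b$ fall among the $j_i$'s (so that the claimed indices $q$ and $r$ genuinely exist with $q < r$), and the boundary convention that the sentinel $n$ plays for $\delr$ the role that $0$ plays for $\dell$. I do not anticipate a real obstacle here; the content is symmetric to Lemma~\ref{L:J cap K dell} under reversing the order on $[1,n-1]$ (roughly $j \mapsto n - j$), but it is cleaner to just write out the short direct argument above rather than invoke such a symmetry formally.
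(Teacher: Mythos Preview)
Your proposal is correct and follows essentially the same approach as the paper: write $J\cup\{n\}=\{j_1<\cdots<j_p<j_{p+1}=n\}$, observe that $J\cap K\cup\{n\}$ has the form $\{j_1<\cdots<j_q<j_r<\cdots<j_{p+1}\}$ for some $1\leq q<r\leq p+1$, and compare the $q$-th entries of $\delr(J)$ and $\delr(J\cap K)$. Your version is slightly more explicit about how $q$ and $r$ are determined by $a$ and $b$, which the paper simply asserts, but the argument is otherwise identical.
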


\begin{proof}
Write $J \cup \{n\} = \{ j_1 < \cdots < j_p < j_{p+1}=n\}$ where $p \geq 1$ (since $J \neq \emptyset$).
Since $j_1=\mu$ and $j_p \leq n-1$ it is clear that there are $1 \leq q < r \leq p+1$ such that
\[
J \cap K \cup \{n\} = 
\{ j_1 < \cdots <j_q < j_r < \cdots < j_{p+1}=n\}.
\]
If $r=q+1$ then $J \cap K = J$, in which case $\delr(J \cap K)=\delr(J)$.
If $r>q+1$ then $j_r>j_{q+1}$, so $j_{q+1}-j_{q}<j_r-j_q$, and since
\begin{eqnarray*}
\delr(J) &=& (j_2-j_1\, , \, \cdots \, , \, j_q-j_{q-1} \, , \, j_{q+1}-j_q \, , \, \cdots) \\
\delr(J \cap K) &=& (j_2-j_1\, , \, \cdots \, , \, j_q-j_{q-1} \, , \, j_r - j_q \, , \, \cdots) \\
\end{eqnarray*}
it follows that $\delr(J) <_L \delr(J \cap K)$.
\end{proof}

\begin{proof}[Proof of Reduction \ref{reduction 3}]
Assume false, namely $\Fred \subsetneq F$.
Recall from Definition \ref{D:excess} the filtration $\{F^e\}$ of $F$.
Notice that $F^{-n-1}=\emptyset \subseteq \Fred$ and that by assumption $F^{n-2}=F \nsubseteq \Fred$.
Let $-n \leq e \leq n-2$ be minimal such that $F^e \nsubseteq \Fred$ and set 
\[
\Omega = F^e \setminus \Fred.
\]
It follows from the minimality of $e$ that $F^{e-1} \subseteq \Fred$, hence $\epsilon(t')=e$ for all $t' \in \Omega$.

Among all $t \in \Omega$ choose one for which $\omega(t)$, see Definition \ref{def:omega(t)}, is maximal in the partial order on $W \times W$ (that is, for any $t' \in \Omega$ either $\omega(t)$ and $\omega(t')$ are not comparable or $\omega(t') \preceq_L \omega(t)$).
There is a unique $1 \leq k \leq n$ such that $t \in F^e(k)$.

By \eqref{eqn:sigma_k in P} $k \in M(t)$.
In the remainder of the proof we will show that for every $i \in M(t) \setminus \{k\}$, if we set $s=\sigma_i(t) \in P$ then $t' \preceq s$ for some $t' \in \Fred$.
Lemma \ref{L:G lemma} then shows that $t \in \Fred$ which is a contradiction.
This contradiction completes the proof of Reduction \ref{reduction 3}.

Set $\lambda=\lambda(t)$ and $\rho=\rho(t)$.
By Lemma \ref{L:description M(t)} $M(t) = [1,\lambda] \cup [\rho+1,n] \cup \{k\}$.
Choose some $i \in M(t) \setminus \{k\}$ and consider $s=\sigma_i(t)$.

\medskip
\noindent
{\em Case 1:} $1 \leq i \leq \lambda$ and $i \neq k$.
Set $b$ and $t'$ as in Lemma \ref{L:lambda<rho i<=lambda}.
Thus, $J_1(t) \neq \emptyset$ (since $\lambda >0$) and $i \leq b \leq \lambda$ and $t'=\partial_{b+1}\sigma_i(t)$ and $t' \in F(b+1)$.
Set $\rho'=\rho(t')$ and $\lambda'=\lambda(t')$.
By Lemma \ref{L:lambda<rho i<=lambda} $J_0(t')=J_0(t) \cap K$ where 
\[
K=[\rho,i-1] \cup [\max\{\rho,b+1\},n-1].
\]
By Definition \ref{def:J_0 J_1 rho lambda} $\rho' \geq \rho$. 
Also, by Lemma \ref{L:lambda<rho i<=lambda}, $\lambda' \leq \lambda$. 

If either $\lambda' < \lambda$ or $\rho' > \rho$ then $\epsilon(t')< \epsilon(t)=e$ so $t' \in F^{\epsilon(t')} \subseteq \Fred$ by the minimality of $e$ and \eqref{eqn:F^e chain of inclusion}, as needed.

So we assume that $\lambda'=\lambda$ and $\rho'=\rho$.
In particular 
\[
\epsilon(t')=\epsilon(t)=e.
\]
By Lemma \ref{L:lambda<rho i<=lambda} $\lambda'=\lambda$ only if $b<\lambda$ in which case $\dell(J_1(t)) <_L \dell(J_1(t'))$.

We now examine $\rho$ and $\rho'$.
If $\rho=\rho'=n$ then $J_0(t)=J_0(t')=\emptyset$ in which case $\delr(J_0(t))=\delr(J_0(t))$, both equal to the empty word.

So assume that $\rho=\rho'<n$.
Then $J_0(t), J_0(t') \neq \emptyset$ and in particular $\rho' \in K$.
So $\rho \in K$.
Hence, either $\rho \leq i-1$ or $\rho \geq b+1$.
If $\rho \geq b+1$ then $\rho>i-1$ since $i\leq b$, so $K=[\rho,n-1]$.
This implies that $J_0(t')=J_0(t)$ since $J_0(t) \subseteq [\rho,n-1]$, and therefore $\delr(J_0(t))=\delr(J_0(t'))$.
If $\rho \leq i-1$ then $\rho < b+1$ since $i\leq b$, and therefore $K=[\rho,i-1] \cup [b+1,n-1]$.
Lemma \ref{L:J cap K delr} implies that $\delr(J_0(t)) \leq_L \delr(J_0(t'))$.

To conclude, in Case 1 we get $\dell(J_1(t)) <_L \dell(J_1(t'))$ and  $\delr(J_0(t)) \leq_L \delr(J_0(t'))$ (in either case $\rho=\rho'=n$ or $\rho=\rho'<n$).
Hence, $\omega(t) \prec_L \omega(t')$.
Now, $t' \in F^e$ since we have already seen that  $\epsilon(t')=e$.
The maximality of $\omega(t)$ in $\Omega$ implies that $t' \in \Fred$, as needed.

\medskip
\noindent
{\em Case 2:} $\rho+1 \leq i \leq n$ and $i \neq k$.
Set $b$ and $t'$ as in Lemma \ref{L:lambda<rho i>rho}.
Thus, $J_0(t) \neq \emptyset$ (since $\rho<n$), and $\rho \leq b \leq i-1$, and $t'=\partial_b(\sigma_i(t))$ and $t' \in F(b)$.
Set $\lambda'=\lambda(t')$ and $\rho'=\rho(t')$.
By Lemma \ref{L:lambda<rho i>rho} $J_1(t')=J_1(t) \cap K$ where 
\[
K = [1,\min\{\lambda,b-1\}] \cup [i,\lambda].
\]
Dy Definition \ref{def:J_0 J_1 rho lambda}, $\lambda' \leq \lambda$.
Also, by  Lemma \ref{def:J_0 J_1 rho lambda}, $\rho' \geq \rho$. 

If either $\lambda' < \lambda$ or $\rho' > \rho$ then $\epsilon(t')< \epsilon(t)$ so $t' \in \Fred$ by the minimality of $e$ and \eqref{eqn:F^e chain of inclusion}, and we are done.
So we assume that $\lambda'=\lambda$ and $\rho'=\rho$.
In particular 
\[
\epsilon(t')=\epsilon(t)=e.
\]
By Lemma \ref{L:lambda<rho i>rho} $\rho'=\rho$ only if $b>\rho$ in which case $\delr(J_0(t)) <_L \delr(J_0(t'))$.

If $\lambda=\lambda'=0$ then $J_1(t)=J_1(t')=\emptyset$ in which case $\dell(J_1(t'))=\dell(J_1(t))$, both are equal to the empty word.
So assume that $\lambda=\lambda'>0$.
Then $J_1(t') \neq \emptyset$ and in particular $\lambda' \in K$.
So $\lambda \in K$.
Therefore either $\lambda \leq b-1$ or $i \leq \lambda$.

If $\lambda \leq b-1$ then $\lambda < i$ since $b \leq i-1$ and therefore $K=[1,\lambda]$.
By Definition \ref{def:J_0 J_1 rho lambda} $J_1(t) \subseteq [1,\lambda]$ so $J_1(t')=J_1(t)$ in this case, and then $\dell(J_1(t'))=\dell(J_1(t))$.
If $i \leq \lambda$ then $b-1\leq i-2<i\leq \lambda$ so $K=[1,b-1] \cup [i,\lambda]$.
Lemma \ref{L:J cap K dell} implies that $\dell(J_1(t)) \leq_L \dell(J_1(t'))$.

To conclude, in Case 2 we get $\delr(J_0(t)) <_L \delr(J_0(t'))$ and $\dell(J_1(t)) \leq_L \dell(J_1(t'))$ (in either case $\lambda=\lambda'=0$ or $\lambda=\lambda'>0$).
It follows that $\omega(t) \prec_L \omega(t')$.
We have seen that $\epsilon(t')=e$ so $t' \in F^e$.
The maximality of $\omega(t)$ in $\Omega$ implies that $t' \in \Fred$, and we are done.
\end{proof}

\bibliography{bibliography}{}

\begin{thebibliography}{10}

\bibitem{MR1035492}
Anders Bj\"{o}rner.
\newblock The {M}\"{o}bius function of subword order.
\newblock In {\em Invariant theory and tableaux ({M}inneapolis, {MN}, 1988)},
  volume~19 of {\em IMA Vol. Math. Appl.}, pages 118--124. Springer, New York,
  1990.

\bibitem{MR0690055}
Anders Bj\"{o}rner and Michelle Wachs.
\newblock On lexicographically shellable posets.
\newblock {\em Trans. Amer. Math. Soc.}, 277(1):323--341, 1983.

\bibitem{MR4071308}
Wojtek Chacholski, Ran Levi, and Roy Meshulam.
\newblock On the topology of complexes of injective words.
\newblock {\em J. Appl. Comput. Topol.}, 4(1):29--44, 2020.

\bibitem{MR0529895}
Frank~D. Farmer.
\newblock Cellular homology for posets.
\newblock {\em Math. Japon.}, 23(6):607--613, 1978/79.

\bibitem{MR1867354}
Allen Hatcher.
\newblock {\em Algebraic topology}.
\newblock Cambridge University Press, Cambridge, 2002.

\bibitem{MR2552261}
Jakob Jonsson and Volkmar Welker.
\newblock Complexes of injective words and their commutation classes.
\newblock {\em Pacific J. Math.}, 243(2):313--329, 2009.

\bibitem{MR2155519}
Moritz~C. Kerz.
\newblock The complex of words and {N}akaoka stability.
\newblock {\em Homology Homotopy Appl.}, 7(1):77--85, 2005.

\bibitem{MR0084138}
John Milnor.
\newblock The geometric realization of a semi-simplicial complex.
\newblock {\em Ann. of Math. (2)}, 65:357--362, 1957.

\bibitem{MR3032101}
Oscar Randal-Williams.
\newblock Homological stability for unordered configuration spaces.
\newblock {\em Q. J. Math.}, 64(1):303--326, 2013.

\bibitem{MR2043333}
Victor Reiner and Peter Webb.
\newblock The combinatorics of the bar resolution in group cohomology.
\newblock {\em J. Pure Appl. Algebra}, 190(1-3):291--327, 2004.

\bibitem{MR0300281}
C.~P. Rourke and B.~J. Sanderson.
\newblock {$\Delta$}-sets. {I}. {H}omotopy theory.
\newblock {\em Quart. J. Math. Oxford Ser. (2)}, 22:321--338, 1971.

\bibitem{MR0586429}
Wilberd van~der Kallen.
\newblock Homology stability for linear groups.
\newblock {\em Invent. Math.}, 60(3):269--295, 1980.

\end{thebibliography}
\bibliographystyle{plain}

\end{document}